\documentclass[final,5p,times,twocolumn]{elsarticle}
\usepackage{epsfig}
\usepackage{amsmath,amssymb,amsfonts}
\usepackage{amsmath}
\usepackage{amsthm}
\usepackage{algorithm}
\usepackage{algpseudocode}
\usepackage{caption}
\usepackage{subcaption}
\DeclareCaptionLabelFormat{simple}{#1~#2}
\usepackage{graphicx}
\usepackage{geometry}
\usepackage{booktabs, tabularx, makecell, multirow}
\usepackage{balance}
\usepackage[capitalize]{cleveref}
\usepackage{microtype}

\newtheorem{remark}{Remark}
\newtheorem{theorem}{Theorem}
\newtheorem{assumption}{Assumption}

\emergencystretch=\maxdimen
\def\BibTeX{{\rm B\kern-.05em{\sc i\kern-.025em b}\kern-.08em
		T\kern-.1667em\lower.7ex\hbox{E}\kern-.125emX}}

\begin{document}

\begin{frontmatter}         

\title{A Taylor-Bernstein Inner Approximation Algorithm for Path-Constrained Dynamic Optimization\tnoteref{t1}}

\tnotetext[t1]{This work was supported by Scientific Research Fund of Educational Department of Liaoning Province of China (LJ212510167008), National Natural Science Foundation of China (61873041), and Natural Science Foundation of Liaoning Province of China (2024-MS-184).}

\author[inst1]{Yuan Chang}
\ead{2024008001@bhu.edu.cn}

\author[inst2]{Lizhong Jiang}
\ead{LizhongJiang_adm@163.com}

\author[inst1]{Tai-Fang Li\corref{cor1}}
\ead{taifang0416@bhu.edu.cn}

\author[inst2]{Jun Fu}
\ead{junfu@mail.neu.edu.cn}

\cortext[cor1]{Corresponding author}

\affiliation[inst1]{organization={College of Control Science and Engineering, Bohai University},
	city={Jinzhou},
	postcode={121013},
	state={Liaoning},
	country={China}}

\affiliation[inst2]{organization={State Key Laboratory of Synthetical Automation for Process Industries, Northeastern University},
	city={Shenyang},
	postcode={110819},
	state={Liaoning},
	country={China}}

\begin{abstract}
A novel inner approximation algorithm is proposed for dynamic optimization problems to ensure strict satisfaction of path constraints. Distinct from traditional methods relying on interval analysis, the proposed algorithm leverages the convex hull property of Bernstein polynomials to tightly bound the polynomial components of the Taylor expansion, while incorporating the Log-Sum-Exp technique to smooth the non-differentiability arising from coefficient maximization. This approach yields a tighter upper bound function compared to interval methods, with a smaller approximation error. Theoretical analysis shows that the algorithm converges in a finite number of steps to a KKT solution of the original problem that satisfies the specified tolerances. Numerical simulations confirm that the proposed algorithm effectively reduces the number of constraints in the approximation problem, improving computational performance while ensuring strict feasibility.

\end{abstract}

\begin{keyword}
Interval analysis, Bernstein polynomial, dynamic optimization, path constraint, inner approximation.
\end{keyword}

\end{frontmatter}

\section{Introduction}
Dynamic optimization is widely applied in chemical engineering \cite{1}, robotics engineering \cite{2}, and trajectory optimization \cite{3}, aiming to identify optimal control strategies for complex systems. In practical engineering scenarios, system states are typically constrained by physical limits or safety regulations \cite{30,31}. These constraints manifest as path constraints that must be strictly satisfied throughout the entire time horizon, while any violation may lead to equipment damage or even catastrophic consequences. However, path constraints are inherently infinite-dimensional constraints defined over continuous time. Efficiently transforming them into finite- dimensional forms while ensuring the strict feasibility of numerical solutions remains a major challenge in this field.

Currently, mainstream numerical methods for solving dynamic optimization problems primarily rely on direct transformation techniques \cite{4,5}. By discretizing the control input (or simultaneously discretizing the state trajectory) into finite-dimensional decision variables, the original optimal control problem is transformed into a Nonlinear Programming (NLP) problem and solved by mature NLP solvers. However, within this framework, infinite-dimensional path constraints defined in the continuous time domain are typically relaxed into algebraic inequalities at a finite number of discrete grid points. This approach to handling point constraints suffers from a fundamental flaw: it offers no theoretical guarantee that the constraints are strictly satisfied between discrete points. To address this issue, \cite{6} and \cite{7} proposed converting path constraints into integral forms to suppress violations by limiting the accumulation of constraint violations over time. However, since the gradient of the integral constraint vanishes within the feasible region, the constraint bound is typically relaxed to a small positive tolerance, leading to slight constraint violations.

To ensure the strict feasibility of path constraints, \cite{8} proposed a deterministic global optimization method based on a branch-and-bound framework, utilizing validated integration techniques to verify constraint satisfaction in dynamic systems within the continuous-time domain. However, this approach is limited by computational complexity and is typically applicable only to low-dimensional problems. In contrast, \cite{9} introduced a semi-infinite programming approach that iteratively restricts the right-hand side of path constraints and incorporates points with maximum violations into the constraint set, thereby progressively eliminating violations and achieving finite-step convergence. This strategy was subsequently successfully extended to switched systems \cite{10,11} and model predictive control \cite{12,13}. Building upon this foundation, \cite{14} enhanced computational efficiency by constructing polynomial approximations between adjacent nodes and imposing constraints at their extrema, albeit at the cost of increasing the number of constraints.

Unlike methods that iteratively restrict the right-hand side of constraints, \cite{15,16,17} have developed a distinct research path. The core philosophy of this approach lies in approximating the original problem strictly from the interior of its feasible region. This offers a significant advantage: every solution generated during the iterative process is guaranteed to satisfy the original path constraints. \cite{15} employed $\alpha BB$ techniques to adaptively convexify and relax the sub-optimization to construct an inner approximation algorithm; however, the number of decision variables and constraints scales linearly with the number of subintervals. \cite{16} proposed a “safe envelope” technique based on Bernstein polynomials, achieving strict constraint satisfaction under orthogonal configurations while avoiding excessive conservatism. However, this method is limited to polynomial-form constraints. Recently, \cite{17} combined Taylor models with interval analysis to construct upper bound functions for path constraints. By solving restricted problems and employing adaptive interval refinement strategies, finite convergence properties under strict feasibility were established. Moreover, this method handles general nonlinear constraints and achieves higher computational efficiency than \cite{15} by maintaining a smaller problem scale. However, this method inevitably suffers from dependency effects during interval operations \cite{18}, resulting in loosely constructed upper bound functions that often force algorithms to perform excessive grid refinement for convergence. Therefore, effectively suppressing dependency effects to construct tighter upper bound functions is crucial for enhancing the efficiency of these algorithms.

To effectively suppress the dependency effect that causes computational inefficiency, the Taylor-Bernstein form is a powerful solution. In the field of global optimization, this technique has been successfully validated \cite{19,20,21}, where the convex hull property of Bernstein polynomials is used to construct tight envelopes, thereby significantly reducing the conservatism introduced by interval arithmetic. However, despite its success in static optimization, the application of the Taylor-Bernstein form to the rigorous treatment of infinite- dimensional path constraints in dynamic optimization remains unexplored. Addressing this gap, this paper introduces the Taylor-Bernstein form into the gradient-based dynamic optimization framework for the first time. By constructing a novel smooth upper bound function and designing a corresponding iterative algorithm, we achieve strict constraint satisfaction with improved efficiency. The main contributions are summarized as follows:

$\bullet$ A novel construction of smooth upper bound functions for path constraints and an associated inner approximation algorithm are proposed. The algorithm leverages the convex hull property of Bernstein polynomials and Log-Sum-Exp (LSE) smoothing to construct a rigorous yet differentiable upper bound function and yields a solution that strictly satisfies path constraints by iteratively solving approximation problems.

$\bullet$ Theoretical guarantees regarding the strictness of the upper bound and the convergence of the algorithm are rigorously established. Mathematical analysis proves that the constructed function serves as a strict upper bound for the original path constraints. Furthermore, the algorithm is proven to terminate within a finite number of iterations, converging to a KKT solution of the original problem with specified tolerances under strict feasibility.

$\bullet$ The tightness of the upper bound function and the computational efficiency of the algorithm are verified through numerical simulations. By mitigating the dependency effect, the proposed algorithm effectively reduces the constraint scale, thereby significantly improving computational efficiency.

The paper is structured as follows. Section 2 provides the problem formulation and necessary preliminaries. Section 3 details the construction of the smooth Taylor-Bernstein path constraint upper bound function and presents the associated gradient analysis. In Section 4, the inner approximation algorithm is proposed, and its theoretical properties are established. Section 5 presents numerical simulations to validate the effectiveness of the proposed method. Finally, Section 6 concludes the paper and outlines future research directions.

\section{Problem Statement and Preliminaries}
\subsection{Problem Statement}
Consider a dynamical system described by ordinary differential equations. We investigate the following path-constrained dynamic optimization problem, in which the control vectors have been parameterized\sloppy
\begin{equation} \label{eq1}
	\begin{split}
		\min_{u\in U} \quad & \Phi(x(t_f)) \\
		\mathrm{s.t.}\quad & \dot{x}(t) = f(x(t), u),\\
		& h(x(t), u) \leqslant 0, \quad t\in I = [t_0,t_f],\\
		& x(t_0) = x_0 ,\\
	\end{split}
\end{equation}
where $u=[u_1,u_2,\dots,u_{n_u}]^T$ denotes the parameterized finite-dimensional control vector, $U\subset \mathbb{R}^{n_u}$ denotes the feasible region of the control vector, which is a nonempty compact set. $x(t)=[x_{1}(t),\dots,x_{n_x}(t)]^{T}$ represents the state vector, which can be obtained by integrating the ODE system. $f: \mathbb{R}^{n_x} \times\mathbb{R}^{n_u}\to\mathbb{R}^{n_x}$ is the right-hand side term of the differential equation. $h: \mathbb{R}^{n_x} \times\mathbb{R}^{n_u}\to\mathbb{R}^{n_h}$ denotes the path constraint. $x_0$ is a given initial state. The objective function $\Phi: \mathbb{R}^{n_x}\to\mathbb{R}$ is the endpoint cost. We assume that the objective function and path constraint functions are sufficiently smooth.
\begin{assumption}[\cite{24}]
	 There exists a real number $K>0$, such that $\|f(x,u)\| \leqslant K (1+\|x\|)$.
\end{assumption}
According to Assumption 1, for every $u$, there exists a corresponding $x(t)$ that satisfies the differential equation of the dynamical system.
\subsection{Preliminaries}
In the algorithm design of this paper, interval analysis and Bernstein polynomials serve as our primary tools. This section provides a brief introduction to their concepts and properties. For a comprehensive treatment, see \cite{22,23}.

Let $\mathbb{IR}$ denote the set of compact real intervals. An interval $T = [\underline{T}, \overline{T}] \in \mathbb{IR}$ defines a set $\{t \in \mathbb{R} \mid \underline{T} \leqslant t \leqslant \overline{T}\}$. For a continuous function $h: \mathbb{R} \to \mathbb{R}$, an inclusion function $[h]: \mathbb{IR} \to \mathbb{IR}$ satisfies the property that for any sub-interval $T \subseteq \mathbb{IR}$, the range of the function is contained within the interval evaluation, i.e., $\{h(t) \mid t \in T\} \subseteq [h](T)$. While standard interval arithmetic guarantees inclusion, it often suffers from the dependency effect, leading to overestimation errors that scale linearly with the interval width \cite{36}. To mitigate this, we utilize the Bernstein form.

Bernstein polynomials provide a tighter enclosure for polynomial functions over a bounded interval. Consider a polynomial $p(\tau)$ of degree $n$ defined on the normalized interval $\tau \in [0, 1]$. The Bernstein basis polynomials of degree $r$ ($r \geqslant n$) are defined as
\begin{equation}\label{eq2}
	B_j^r(\tau)=\binom{r}{j}\tau^j(1-\tau)^{r-j},\quad j=0,1,\ldots,r,
\end{equation}
which satisfy the partition of unity property $\sum_{j=0}^{r}B_{j}^{r}(\tau) \equiv 1$ and non-negativity $B_{j}^{r}(\tau) \geqslant 0$. Consequently, the polynomial $p(\tau)$ can be uniquely represented in Bernstein form as
\begin{equation}\label{eq3}
	p(\tau)=\sum_{j=0}^rb_jB_j^r(\tau),
\end{equation}
where the Bernstein coefficients $b_j$ are derived from the coefficients $\beta_s$ of the original power basis form via the linear mapping
\begin{equation}\label{eq4}
	b_j=\sum_{s=0}^{\min(j,n)}\beta_s\frac{\binom{j}{s}}{\binom{r}{s}}.
\end{equation}
Leveraging the convex hull property of the basis functions, the range of $p(\tau)$ over $[0, 1]$ is strictly bounded by the minimum and maximum of its coefficients
\begin{equation} \label{eq5}
	p(\tau)\in\left[\min_{0\leqslant j\leqslant r}b_j,\max_{0\leqslant j\leqslant r}b_j\right],\quad\forall\tau\in[0,1].
\end{equation}
\begin{remark}
The envelope provided by \eqref{eq5} offers significant advantages over natural interval extensions, effectively mitigating interval dependence effects. As the interval width $\Delta(T)\rightarrow0$, the overestimation error of standard interval arithmetic decays linearly, whereas the Bernstein convex hull converges to the exact range of the polynomial at a quadratic rate \cite{32}. By leveraging this structural property, the Bernstein form provides a tighter, less conservative enclosure for polynomials than traditional interval methods. The corresponding visual comparison is illustrated in Fig. \ref{fig1}.
\end{remark}
\begin{figure}[htpb] 
	\centering 
	\includegraphics[height=8.5cm,width=8.5cm]{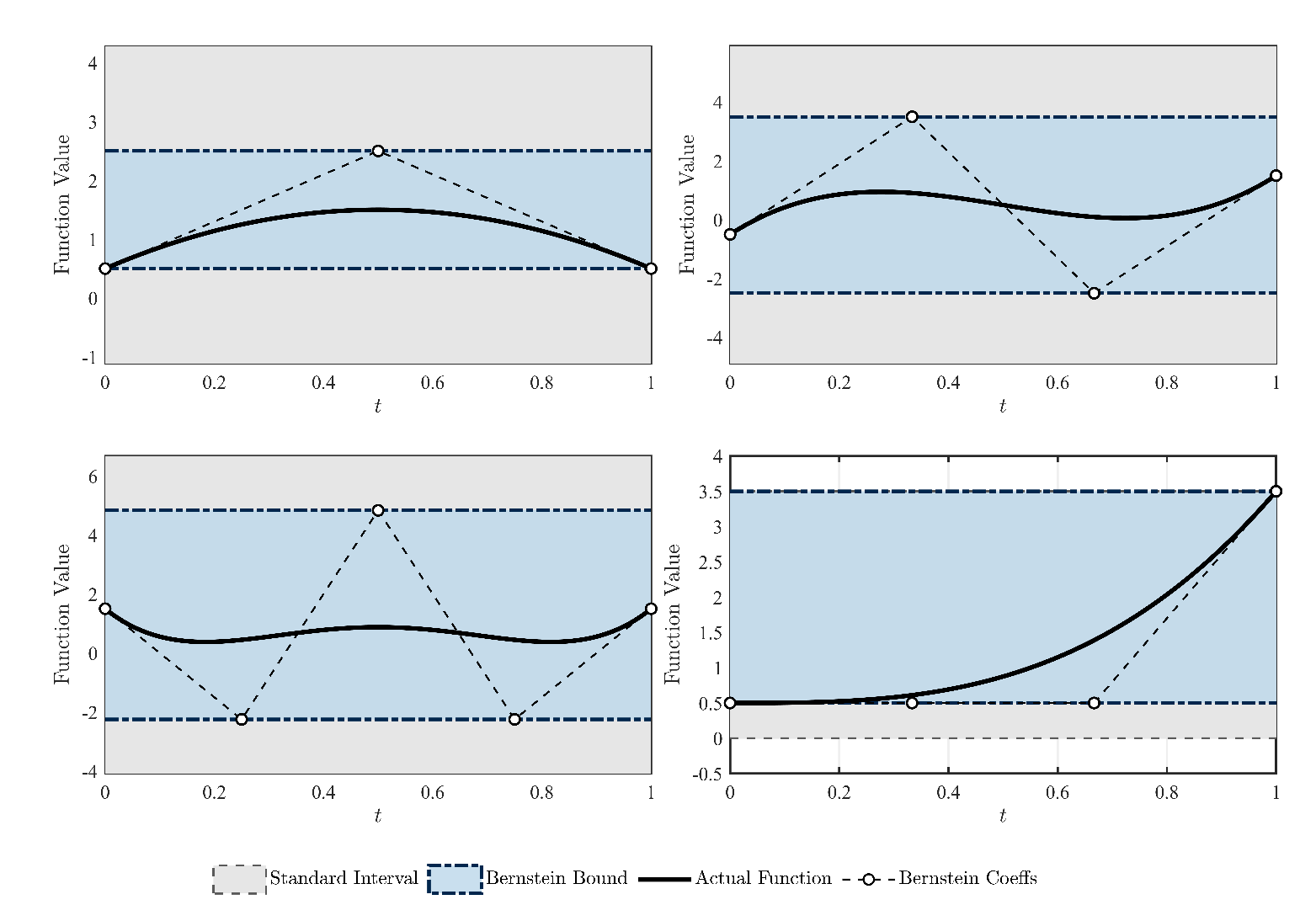} 
	\caption{Comparison of polynomial enclosures between standard interval arithmetic and the Bernstein form. This figure illustrates four different polynomial functions, where shaded areas represent the respective enclosures. In the first three cases, the bounds of standard interval arithmetic are omitted from the view as they significantly exceed the vertical axis limits due to severe overestimation. In contrast, the Bernstein form yields significantly tighter bounds by leveraging the convex hull property, demonstrating its capability to suppress the dependency effect and reduce conservatism.} 
	 \label{fig1}
\end{figure}
\section{Construction of Path Constraint Upper Bound Functions}
Utilizing Bernstein polynomials and interval analysis, this section constructs a smooth upper bound function for path constraints to facilitate subsequent algorithm design.
\subsection{Upper Bound Function Construction}
Let $\mathcal{\varGamma}$ denote the set of all closed subintervals of $[t_0, t_f]$. For any interval $T \in \mathcal{\varGamma}$, we perform a $q$th-order Taylor expansion of the path constraint function $h$ with respect to time $t$ at the midpoint $c(T)=\frac{1}{2}(\underline{T}+\overline{T})$ of the interval
\begin{equation}\label{eq6}
	\begin{aligned}
	h(u,t)&=\sum_{i=0}^{q-1}\frac{1}{i!}\frac{\partial^ih(u,c(T))}{\partial t^i}(t-c(T))^i\\
	&+\frac{1}{q!}\frac{\partial^qh(u,\xi)}{\partial t^q}(t-c(T))^q,\quad \forall t \in T,
	\end{aligned}
\end{equation}
where $\xi$ lies between $t$ and $c(T)$. For simplicity, we set
\begin{equation}
	p_{q-1}(u,t)=\sum_{i=0}^{q-1}a_i(u,c(T))(t-c(T))^i, 
\end{equation}
where $a_i(u,c(T))=\frac{1}{i!}\frac{\partial^ih(u,c(T))}{\partial t^i}$ denotes the coefficient of the Taylor polynomial, and $r_q(u,t)=\frac{1}{q!}\frac{\partial^qh(u,\xi)}{\partial t^q}(t-c(T))^q$.
Subsequently, we will separately address the polynomial part $p_{q-1}(u,t)$ and the remainder term part $r_q(u,t)$.

For the polynomial part $p_{q-1}(u,t)$, we utilize the convex hull property of Bernstein polynomials to construct a tight upper bound. Since the standard Bernstein basis functions are defined on the interval $[0,1]$, we transform the polynomial $p_{q-1}(u,t)$ from the original time interval $T$ onto the interval $[0,1]$. The affine transformation is defined as follows
\begin{equation}\label{eq7}
	\tau(t)=\frac{t-\underline{T}}{\Delta(T)}\Longrightarrow t(\tau)=\underline{T}+\tau \Delta(T),
\end{equation}
where $\Delta(T)$ denotes the length of the time interval $T$. By substituting \eqref{eq7} into $p_{q-1}(u,t)$, we obtain
\begin{equation} \label{eq8}
	\begin{aligned}
	p_{q-1}^*(\tau)&=\sum_{i=0}^{q-1}a_i(u,c(T))\left[\Delta(T)(\tau-\frac{1}{2})\right]^i
	\triangleq \sum_{l=0}^{q-1}\alpha_l(u,T)\tau^l,
	\end{aligned}
\end{equation}
where $\alpha_l(u,T)$ denotes the polynomial coefficient after the affine transformation. Next, we will transform $p_{q-1}^*(\tau)$ into the form of a $r$th-order Bernstein polynomial
\begin{equation}\label{eq9}
	p_{q-1}^*(\tau)=\sum_{j=0}^rb_j(u,T)\cdot B_j^r(\tau),
\end{equation}
where the Bernstein coefficient $b_j$ is calculated from coefficient $\alpha_l$ according to \eqref{eq4}
\begin{equation} \label{eq10}
 b_j(u,T)=\sum_{l=0}^{min(j,q-1)}\alpha_l(u,T)\cdot\frac{\binom{j}{l}}{\binom{r}{l}}.
\end{equation}

 Based on \eqref{eq5}, we can derive an upper bound for $p_{q-1}^*(\tau)$ over $\tau \in [0,1]$. Since $t$ and $\tau$ form a one-to-one correspondence, this upper bound also holds for 	$p_{q-1}(u,t)$ at $t \in T$
\begin{equation}\label{eq11}
	p_{q-1}(u,t)\leqslant\max_{j=0,\dots ,r}\{b_j(u,T)\},\quad\forall t\in T.
\end{equation}

For the remainder term $r_q(u,t)$, we utilize interval analysis techniques to construct the interval envelope $R_q(u,T)$ of the remainder term as \cite{17}. The interval envelope is expressed as 
\begin{equation}\label{eq12}
	R_q(u,T)=\frac{1}{q!}\cdot h^{(q)}(u,T)\cdot(T-c(T))^q,
\end{equation}
where $h^{(q)}(u,T)=[\underline{h^{(q)}}(u,T),\overline{h^{(q)}}(u,T)]$ denotes the interval envelope of the $q$th derivative of $h(u,T)$ at $t\in T$. For convenience in solving, we introduce a global constant interval $[\Lambda_L,\Lambda_U]$, which envelops the $q$th derivative over the entire optimization domain
\begin{equation}\label{eq13}
	\left\{\frac{\partial^qh(u,t)}{\partial t^q} {\bigg|} \forall(u,t)\in U\times I\right\}\subseteq[\Lambda_L,\Lambda_U].
\end{equation}

We can use differential inequalities or heuristic methods to compute $[\Lambda_L,\Lambda_U]$. For details, see \cite{17}. By substituting the global bounds $[\Lambda_L, \Lambda_U]$ into the remainder expression, we obtain a constant interval envelope $R_q(T)$ independent of $u$
\begin{equation}\label{eq14}
	R_q(T)=\frac{1}{q!}\cdot[\Lambda_L,\Lambda_U]\cdot\left[-\frac{\Delta(T)}{2},\frac{\Delta(T)}{2}\right]^q.
\end{equation}
According to interval arithmetic rules, the upper bound of $R_q(T)$ can be expressed as
\begin{equation}\label{eq15}
	\overline{R}_q(T)\triangleq\frac{1}{q!}\left(\frac{\Delta(T)}{2}\right)^qB_U,
\end{equation}
where $B_U$ is a precomputed constant defined as $B_U=\max\{|\Lambda_L|,|\Lambda_U|\}$ when $q$ is odd, and $B_U=\max\{0,\Lambda_U\}$ when $q$ is even.

Combining the polynomial bound \eqref{eq11} and the remainder bound \eqref{eq15}, we obtain a tight theoretical upper bound for the path constraint on interval $T$
\begin{equation}\label{eq16}
	h(u,t)\leqslant\max_{j=0,\dots,r}\left\{b_j(u,T)\right\}+\overline{R}_q(T).
\end{equation}

Although \eqref{eq16} establishes a strict upper bound, the non-differentiable max operator poses significant challenges for gradient-based NLP solvers. A direct approach involves imposing constraints on each Bernstein coefficient separately \cite{16,33,34}. However, this leads to an linear increase in the number of constraints when higher-order approximations or adaptive interval refinement are required. This results in an excessive computational burden during the optimization process. To effectively reduce the constraint scale while preserving differentiability, we construct a globally differentiable and strictly conservative upper bound using the LSE function \cite{25} defined as
\begin{equation}\label{eq17}
	\begin{aligned}
	H_{TB}(u,T)&\triangleq \frac{1}{\rho}\ln\left(\sum_{j=0}^r\exp(\rho\cdot b_j(u,T))\right)+\overline{R}_q(T),
	\end{aligned}
\end{equation}
where, for any $\rho> 0$, the deviation between the LSE smooth term and the actual maximum coefficient satisfies
\begin{equation}\label{eq18}
	\begin{aligned}
	\frac{1}{\rho}\ln\left(\sum_{j=0}^r\exp(\rho\cdot b_j(u,T))\right)-\max_{j=0,\ldots,r}\{b_j(u,T)\}\leqslant\frac{\ln(r+1)}{\rho}.
	\end{aligned}
\end{equation}

Since both $r$ and $\rho$ are predetermined constants, the right-hand side of \eqref{eq18} is a constant. We denote it as $\delta\triangleq\frac{\ln(r+1)}{\rho}$.  The rigorous relationship between the constructed smooth upper bound function and the original path constraint is given in the following theorem.

\begin{theorem}
For any smoothing parameter $\rho > 0$, polynomial degree $r \geqslant 1$, and time subinterval $T \subseteq I$, the smooth function $H_{TB}(u, T)$ defined in (17) serves as a strict upper bound for the path constraint function $h(u, t)$. Specifically, the overestimation error gap, defined as $E(u, T) \triangleq H_{TB}(u, T) - \max_{t \in T} h(u, t)$, satisfies
\begin{equation}\label{eq19}
	0<E(u,T)\leqslant \delta+O(\Delta(T)^2)+O(\Delta(T)^q).
\end{equation}
\end{theorem}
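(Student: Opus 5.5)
The plan is to prove the two inequalities in \eqref{eq19} separately, writing the gap as a telescoping sum
\begin{equation*}
	\begin{aligned}
	E(u,T)&=\Big[\tfrac{1}{\rho}\ln\textstyle\sum_{j}e^{\rho b_j}-\max_j b_j\Big]+\Big[\max_j b_j-\max_{t\in T}p_{q-1}(u,t)\Big]\\
	&\quad+\Big[\max_{t\in T}p_{q-1}(u,t)-\max_{t\in T}h(u,t)\Big]+\overline{R}_q(T).
	\end{aligned}
\end{equation*}
Here $b_j$ is shorthand for $b_j(u,T)$. The lower bound will use this decomposition only qualitatively. The upper bound will estimate each bracket in turn.

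\textbf{Strict positivity.} Let $j^*$ be an index attaining the maximum coefficient. Since $r\geqslant 1$, the sum in the LSE contains $e^{\rho b_{j^*}}$ plus at least one further strictly positive term. Hence
\[
\tfrac{1}{\rho}\ln\textstyle\sum_j e^{\rho b_j}\geqslant \max_j b_j+\tfrac{1}{\rho}\ln\bigl(1+e^{\rho(b_{j'}-b_{j^*})}\bigr)>\max_j b_j
\]
for any $j'\neq j^*$. Adding $\overline{R}_q(T)$ to both sides and using \eqref{eq16} gives $H_{TB}(u,T)>\max_j b_j+\overline{R}_q(T)\geqslant h(u,t)$ for every $t\in T$. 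Since $T$ is compact and $h$ is continuous, the maximum over $t\in T$ is attained, so $E(u,T)>0$.

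\textbf{Upper bound.} I would bound the four pieces as follows.
\begin{itemize}
\item[(i)] The first bracket is at most $\delta$, directly by \eqref{eq18}.
\item[(iii)] The third bracket is at most $\max_{t\in T}|r_q(u,t)|\leqslant \frac{B}{q!}(\Delta(T)/2)^q$, where $B=\max\{|\Lambda_L|,|\Lambda_U|\}$ comes from \eqref{eq13}. This is because $p_{q-1}=h-r_q$ pointwise on $T$. So this term is $O(\Delta(T)^q)$.
\item[(iv)] $\overline{R}_q(T)$ is $O(\Delta(T)^q)$ by its definition \eqref{eq15}.
\item[(ii)] The second bracket carries the $O(\Delta(T)^2)$ term; it is treated in the next paragraph.
\end{itemize}

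For (ii), the key identity is that degree elevation reproduces polynomials of degree at most one exactly at the nodes, i.e. $\binom{j}{l}/\binom{r}{l}=(j/r)^l$ for $l=0,1$. Substituting into \eqref{eq10} gives
\[
b_j=p^*_{q-1}(j/r)+\sum_{l=2}^{q-1}\alpha_l(u,T)\Big(\tfrac{\binom{j}{l}}{\binom{r}{l}}-(j/r)^l\Big).
\]
This yields $\max_j b_j-\max_{\tau}p^*_{q-1}(\tau)\leqslant C_r\sum_{l\geqslant 2}|\alpha_l(u,T)|$, with $C_r$ depending only on $r$ and $q$. Next, expanding $[\Delta(T)(\tau-\tfrac12)]^i$ in \eqref{eq8} shows that $\alpha_l$ is a combination of terms $a_i\Delta(T)^i$ with $i\geqslant l$. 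Therefore $|\alpha_l|\leqslant C\max_i|a_i(u,c(T))|\,\Delta(T)^l$, which is $O(\Delta(T)^2)$ for $l\geqslant 2$ when $\Delta(T)\leqslant t_f-t_0$.

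\textbf{Main obstacle.} The difficult step is making the constants in (ii) uniform in $u$ and $T$. To do this, I would invoke the assumed smoothness of $h$ and the compactness of $U\times I$, together with Assumption 1, which gives bounded trajectories. These guarantee that the Taylor coefficients $a_i(u,c(T))$, $i\leqslant q-1$, are bounded on $U\times I$. That boundedness makes the hidden constants in the $O(\cdot)$ terms independent of $u$ and $T$, and summing (i) through (iv) yields \eqref{eq19}.
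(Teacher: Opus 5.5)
Your proof is correct and follows the same skeleton as the paper's proof. Positivity comes from the strict LSE gap combined with \eqref{eq16}; you also make explicit that $r\geqslant 1$ supplies a second positive summand, which the paper leaves unstated. The upper bound is $\delta$ from \eqref{eq18} plus the approximation error $(\max_j b_j+\overline{R}_q(T))-\max_{t\in T}h(u,t)$.

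The difference lies in how that last error is controlled. The paper only asserts that it is $O(\Delta(T)^2)+O(\Delta(T)^q)$, citing the known quadratic convergence of the Bernstein range enclosure and the size of the Taylor remainder. You split it into three pieces: the Bernstein excess $\max_j b_j-\max_{t\in T}p_{q-1}(u,t)$, the gap $\max_{t\in T}p_{q-1}-\max_{t\in T}h\leqslant\max_{t\in T}|r_q|$, and $\overline{R}_q(T)$. You then prove the quadratic rate directly, using the linear-precision identity $\binom{j}{l}/\binom{r}{l}=(j/r)^l$ for $l\leqslant 1$ and the scaling $|\alpha_l(u,T)|=O(\Delta(T)^l)$ read off from \eqref{eq8}. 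This makes the argument self-contained, and it makes the hidden constants explicitly uniform in $u$ and $T$, which the paper's $O(\cdot)$ notation leaves implicit.

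One caveat applies equally to your version and the paper's. The representation \eqref{eq9}, and hence \eqref{eq11} and \eqref{eq16}, requires $r\geqslant q-1$. Your extension of the sum in \eqref{eq10} up to $l=q-1$ needs the same condition. For $r<q-1$ the coefficients $\alpha_l$ with $l>r$ are simply dropped and \eqref{eq11} can fail. For example, take $p^*_{q-1}(\tau)=\tau^2$ with $r=1$: then $b_0=b_1=0<1$. So the hypothesis $r\geqslant 1$ in the statement should be read as $r\geqslant\max\{1,q-1\}$.
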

\begin{proof}
 Based on the path constraints  
\begin{equation}
 	h(u,t)=p_{q-1}(u,t)+r_q(u,t),
\end{equation}
we have
\begin{equation}\label{eq20}
	\begin{aligned}
	\max_{t\in T}h(u,t)&\leqslant\max_{t\in T}p_{q-1}(u,t)+\max_{t\in T}r_q(u,t)\\&\leqslant\max_j\{b_j\}+\overline{R}_q(T).
	\end{aligned}
\end{equation}

Substituting the definition of $H_{TB}$ from \eqref{eq17}, the error gap can be written as
\begin{equation}\label{eq21}
	\begin{aligned}
		E(u,T) &= H_{TB}(u,T) - \max_{t \in T} h(u,t) \\
		&\geqslant \frac{1}{\rho} \ln \left(\sum_{j=0}^{r} \exp(\rho \cdot b_j(u,T))\right)-\max_{j} \{b_j\} \\
		&> 0.
	\end{aligned}
\end{equation}

Thus, $E(u, T) > 0$ holds strictly, proving that $H_{TB}(u,T)$ is a strictly valid upper bound. Furthermore, to quantify the strictness of this boundary, we have
\begin{equation}\label{eq22}
	\begin{aligned}E(u,T)&=H_{TB}(u,T)-\max_{t\in T}h(u,t)\\&\leqslant \left(\max_j\{b_j\}+\delta+\overline{R}_q(T)\right)-\max_{t\in T}h(u,t)\\&=\delta+\left[(\max_j\{b_j\}+\overline{R}_q(T))-\max_{t\in T}h(u,t)\right],
	\end{aligned}
\end{equation}
where the approximation error term $(\max_j\{b_j\}+\overline{R}_q(T))-\max_{t\in T}h(u,t)$  is dominated by the convergence rate of the Bernstein polynomial ($O(\Delta(T)^2)$) and the Taylor remainder ($O(\Delta(T)^q)$). Consequently, we arrive at the final bound
\begin{equation}\label{eq23}
	E(u,T)\leqslant\delta+O(\Delta(T)^2)+O(\Delta(T)^q).
\end{equation}
\end{proof}
\begin{remark}
	From \eqref{eq23}, it is observed that the smoothing parameter $\rho$ introduces a distinct component to the overestimation error. Unlike the approximation error in Taylor's remainder and Bernstein's approximation, this term manifests as a systematic, strictly positive safety margin that does not vanish as the interval width decreases. Therefore, this method ensures solutions remain within a slightly contracted feasible region. When $\rho\rightarrow\infty$, this upper bound function converges to the exact envelope of the original path constraints.
\end{remark}
\subsection{Gradient Analysis}
To apply gradient-based NLP solvers, it is necessary to obtain sensitivity information regarding the smooth upper bound function $H_{\mathrm{TB}}(u,T)$ with respect to the control input $u$. Since the Taylor remainder $\overline{R}_q(T)$  is constant with respect to $u$, the gradient depends directly on the sensitivity of the Bernstein coefficients to the control input.

Differentiating \eqref{eq17} with respect to $u$ yields
\begin{equation}\label{eq24}
	\nabla_uH_{TB}(u,T)=\sum_{j=0}^rw_j(u)\cdot\nabla_ub_j(u,T),
\end{equation}
where $w_{j}(u)$ represents the weighting coefficient derived from the derivative of the LSE term, defined as
\begin{equation}\label{eq25}
	w_j(u)\triangleq\frac{\exp(\rho\cdot b_j)}{\sum_{l=0}^k\exp(\rho\cdot b_l)}.
\end{equation}

Calculating directly from \eqref{eq10} would involve multiple nested sums, potentially incurring excessive computational cost. Since the mapping from the time derivatives of $h(u,t)$ to the Bernstein coefficients is linear and time-invariant, we reformulate the gradient computation into a compact matrix form. Let $D(u)\triangleq[h(u,c(T)),\frac{\partial h}{\partial t},\ldots,\frac{\partial^{q-1}h}{\partial t^{q-1}}]^T$ denote the vector of time derivatives of $h(u,t)$ at the midpoint $c(T)$ of the interval. The Bernstein coefficient vector $b=[b_0,\ldots,b_r]^T$ can be expressed as the following linear transformation of $D(u)$
\begin{equation}\label{eq26}
	b=M(T)\cdot D(u),
\end{equation}
where $M(T) \in \mathbb{R}^{(r+1) \times q}$ is a constant transformation matrix. By aggregating terms from \eqref{eq10} and \eqref{eq12}, the elements of $M(T)$ are explicitly given by
\begin{equation}\label{eq27}
	M_{j,i}=\frac{\Delta(T)^i}{i!}\sum_{l=0}^{\min(j,i)}\frac{\binom{j}{l}}{\binom{r}{l}}\binom{i}{l}\left(-\frac{1}{2}\right)^{i-l},
\end{equation}
for $j=0,\dots,r$ and $i=0,\dots,q-1$. Consequently, the total gradient is computed efficiently via matrix-vector multiplication
\begin{equation}\label{eq28}
	\nabla_uH_{TB}(u,T)=W^T\cdot M(T)\cdot\nabla_uD(u),
\end{equation}
where $W = [w_0, \dots, w_r]^T$ and $\nabla_uD(u)$ represents the Jacobian of the time derivatives with respect to $u$. The matrix $M(T)$ consists of constant coefficients that can be precomputed during the algorithm's initialization phase to reduce computational overhead during optimization.
\begin{remark}
When the smoothing parameter $\rho$ is large, directly computing the exponential terms in \eqref{eq17} and \eqref{eq25} risks floating-point overflow. To ensure numerical stability, we employ the LSE shifting technique \cite{29} by normalizing the exponential terms prior to computation. This involves subtracting the maximum exponent value $\rho b_{\max}$ from each exponent, ensuring that the arguments of all exponential functions remain non-positive and thereby guaranteeing robust numerical stability in both function evaluation and gradient computation regardless of the magnitude of $\rho$.
\end{remark}
\section{Algorithm Design and Convergence Analysis}
Based on the smooth upper bound function $H_{TB}(u,T)$ constructed in Section 3, we can transform the original problem \eqref{eq1} into the following approximation problem
\begin{equation} \label{eq29}
	\begin{split}
		\min_{u\in U} \quad & \Phi(x(t_f)) \\
		\mathrm{s.t.}\quad & \dot{x}(t) = f(x(t), u),\\
		& H_{TB}(u,T_{i})\leqslant0,\quad i=1,\ldots,n_H, \\
		& x(t_0)=x_0(u),\\
	\end{split}
\end{equation}
where $n_H$ denotes the number of subintervals into which the interval is subdivided, and $H_{TB}(u,T_{i})$ represents the upper bound function of path constraints on the subinterval $T_{i}$. Since $h(u, t) \leqslant H_{TB}(u, T_{i})$ for all $t \in T_{i}$, any feasible solution to the approximation problem is also a feasible solution to the original problem.
\subsection{Algorithm Description}
We propose a systematic algorithm that iteratively solves the sequence of inner approximation problems \eqref{eq29}, with its procedure summarized in Algorithm \ref{alg1} and Fig. \ref{fig2}. To accelerate convergence to a KKT solution in Algorithm \ref{alg1}, we employ an adaptive subdivision strategy. Specifically, based on the error bounds derived from Theorem 1, we estimate the number of subintervals required to meet the tolerance in a single iteration as follows
\begin{equation}\label{eq30}
	N_m^k=\max\left\{2, \left\lceil\frac{\Delta(T_m^k)}{\sqrt[q]{2^q\cdot q!\cdot(\epsilon_{act}-\delta)/B_U}}\right\rceil\right\}.
\end{equation}

In Case 1, employing this subdivision strategy ensures that the interval width is rapidly narrowed to meet the tolerance requirements.

\begin{figure}[htpb] 
	\centering 
	\includegraphics[height=8.5cm,width=7cm]{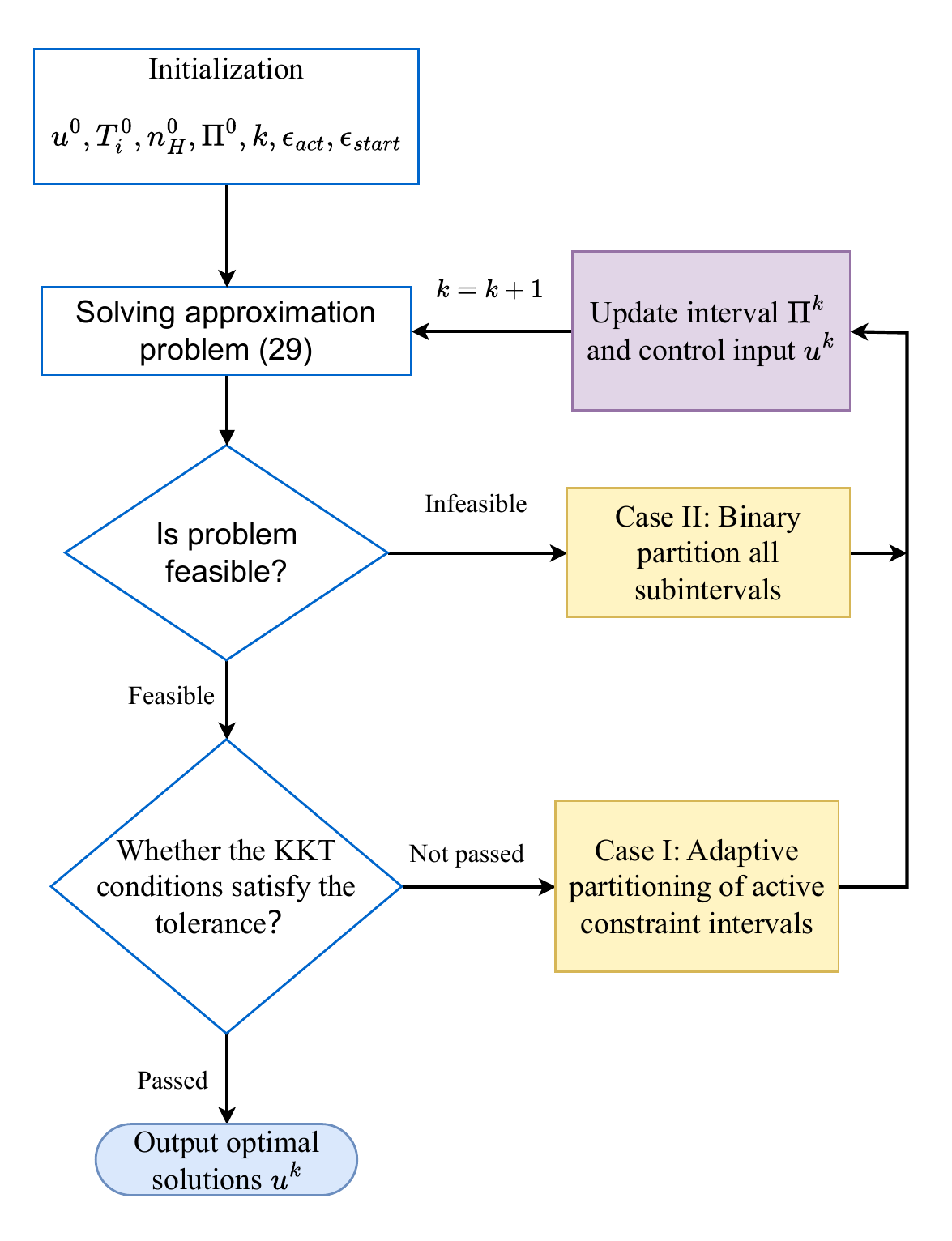} 
	\caption{Algorithm flowchart} 
	\label{fig2} 
\end{figure}

\begin{remark}
It is noteworthy that \eqref{eq30} employs $(\epsilon_{act} - \delta)$ as the subdivision tolerance criterion, which differs significantly from the strategy in \cite{17} that directly uses $\epsilon_{act}$. This is because the smoothing error in \cite{17} naturally vanishes as the interval width decreases, whereas the bias $\delta$ introduced by the LSE method in this paper is constant and independent of the interval width. To ensure the feasibility of the subdivision process, the smoothing parameter $\rho$ must be selected such that $\delta < \epsilon_{act}$ is strictly satisfied. Therefore, this inherent bias must be preemptively deducted from the total tolerance budget to prevent the algorithm from failing to converge.
\end{remark}

\begin{algorithm}[htb]
	\caption{Dynamic optimization algorithm strictly satisfying path constraints}
	\label{alg1}
	\begin{algorithmic}[1] 
		\Require 
		 NLP solver initial guess $u^0$;
		 number of subintervals $n_H^0$;
		 initial subinterval $T_i^0, i=1,\dots,n_H^0$;
		 set of subintervals $\Pi^{0}=\{T_{i}^{0},i=1,\ldots,n_{H}^{0}\}$;
		 iteration count $k$;
		 optimality tolerances $\varepsilon_{stat},\varepsilon_{act} > 0$.
		
		\Statex 
		
		\Repeat 
		\State Solve approximation problem (29) based on initial subintervals $T_i^0$
		\If{the problem is feasible}
		\State Set $u_k$ to the locally optimal solution. Obtain $n_u$ linearly independent gradients $\nabla_uH_\mathrm{TB}(u^k,T_m^k)$ and multipliers $\lambda_m^k$. Define subintervals associated with gradients: $\Pi_{grad}^k=\{\mathcal{T}_m^k,m=1,\ldots,n_u\}$. Set active subintervals $\Pi_{act}^{k}=\begin{Bmatrix}\mathcal{T}_{m}^{k}|H_{{TB}}(u^k,T_m^k)=0,\mathcal{T}_{m}^{k}\in\Pi_{grad}^{k}\end{Bmatrix}$. Set inactive subintervals $\Pi_{inact}^k=\Pi^k-\Pi_{act}^k$.
		
		\State \textbf{if} $u_k$ satisfies the following conditions:
		\begin{equation}\label{eq31}
			\|\nabla_{u}\Phi(x(t_{f},u^{k})) + \sum_{m=1}^{n_u}\lambda_{m}^{k}\nabla_{u}h(u^{k},c(T_m^k))\| \leqslant \epsilon_{stat}
		\end{equation}
		\begin{equation} \label{eq32}
			\lambda_m^k h(u^{k},c(T_m^k)) \in [-\lambda_m^k \epsilon_{act}, 0], m=1,\ldots,n_u,
		\end{equation}
		\begin{equation}\label{eq33}
			h(u^k,t)\leqslant0,\forall t\in I.
		\end{equation}
	\State \hspace{\algorithmicindent} \textbf{Terminate}
	
	\State \textbf{else} \Comment{(Case 1)}
	\State \hspace{\algorithmicindent} Divide each active subinterval $T_m^{k}$ in $\Pi_{act}^k$ 
	\State \hspace{\algorithmicindent}
	into $N_m^k$ equal parts.
	\State \hspace{\algorithmicindent} $\Pi^{k+1}\leftarrow\Pi_{act}^{\prime}\cup\Pi_{inact}^{k}$ and  $u^{k+1}\leftarrow u^{k}$.

		\Else \Comment{(Case 2)}
		\State Binary partition all subintervals $T_i^{k}$.
		\State $\Pi^{k+1}\leftarrow\Pi^{k}$ and $u^{k+1}\leftarrow u^{k}$.
		\EndIf
		\State Set $n_H^{k+1}\leftarrow|\Pi^{k+1}|$ and $k \leftarrow k + 1$.
		\Until{convergence}
	\end{algorithmic}
\end{algorithm}
\subsection{Convergence Analysis}
To establish finite convergence, we make the following assumptions
\begin{assumption}[\cite{17}]
	 For problem \eqref{eq1}, there exists a Slater control $u_s$ and a constant $\gamma > 0$ such that $\max_{t \in I} h(u_s, t) \leqslant -\gamma$.
\end{assumption}
\begin{assumption}[\cite{9}]
	 The KKT multipliers are nonnegative and uniformly bounded throughout all iterations.
\end{assumption}
\begin{theorem}
	Under Assumptions 1-3, if the smoothing parameter $\rho$ is sufficiently large such that $\delta < \epsilon_{act}$, Algorithm 1 terminates in a finite number of iterations with a solution satisfying the approximate KKT conditions of the original problem with tolerances $\epsilon_{stat}, \epsilon_{act} > 0$.
\end{theorem}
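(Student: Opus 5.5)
The argument splits into two parts. First, Case 2 (infeasibility of the approximation problem \eqref{eq29}) can occur only finitely many times. Second, Case 1 (a feasible solution that fails the termination test) can occur only finitely many times. The proof then closes by checking that the termination conditions are exactly the approximate KKT conditions of \eqref{eq1}.

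\textbf{Case 2.} I would use the Slater control $u_s$ from Assumption 2. Theorem 1 gives
\[
H_{TB}(u_s,T)\leqslant \max_{t\in T}h(u_s,t)+\delta+O(\Delta(T)^2)+O(\Delta(T)^q)\leqslant -\gamma+\delta+O(\Delta(T)^2)+O(\Delta(T)^q).
\]
To make this usable I need $\delta<\gamma$, and I would state this explicitly as a requirement on $\rho$ alongside $\delta<\epsilon_{act}$. I also need the $O(\cdot)$ constants to be uniform in $u$ over the compact set $U$. This uniformity follows from smoothness of $h$ in $(u,t)$, the global bounds $[\Lambda_L,\Lambda_U]$, and the quadratic Bernstein convergence rate, whose constant depends on bounds of the second derivative of $p_{q-1}$. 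Each Case 2 iteration bisects every subinterval, so after finitely many such iterations $\max_i\Delta(T_i)$ falls below a threshold $\bar\Delta$ for which $H_{TB}(u_s,T_i)\leqslant0$ for all $i$. Subdivision never enlarges intervals, so from then on \eqref{eq29} is always feasible and Case 2 never occurs again.

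\textbf{Case 1.} Let $u^k$ be a KKT point of \eqref{eq29} with multipliers $\lambda_m^k$ on the gradient subintervals. Feasibility gives $H_{TB}(u^k,T)\leqslant0$ on every subinterval, and since $h\leqslant H_{TB}$ by Theorem 1, condition \eqref{eq33} holds automatically.

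For \eqref{eq32}, on an active interval we have $H_{TB}(u^k,T_m^k)=0$, so Theorem 1 yields
\[
h(u^k,c(T_m^k))\geqslant -E(u^k,T_m^k)\geqslant-\delta-\overline{R}_q(T_m^k)-(\text{Bernstein gap}).
\]
The choice \eqref{eq30} forces $\overline{R}_q(T)\leqslant\epsilon_{act}-\delta$ after a single subdivision, and I would fold the Bernstein gap into this as well, possibly by tightening. Together with $\lambda_m^k\geqslant0$ this gives $h(u^k,c(T_m^k))\in[-\epsilon_{act},0]$. On inactive intervals $\lambda_m^k=0$, so \eqref{eq32} holds trivially.

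For \eqref{eq31}, I would compare $\nabla_uH_{TB}(u^k,T_m^k)$, given by \eqref{eq24}, with $\nabla_u h(u^k,c(T_m^k))$. By \eqref{eq26}--\eqref{eq27}, $M_{j,0}=1$ and $M_{j,i}=O(\Delta^i)$ for $i\geqslant1$. The weights $w_j$ sum to one. Hence
\[
\nabla_uH_{TB}=\nabla_u h(u,c(T))+O(\Delta(T))
\]
uniformly on $U$. Stationarity of \eqref{eq29} combined with the uniformly bounded multipliers of Assumption 3 then gives
\[
\Bigl\|\nabla_u\Phi+\textstyle\sum_m\lambda_m^k\nabla_u h(u^k,c(T_m^k))\Bigr\|\leqslant \Lambda\, n_u\, C\max_m\Delta(T_m^k),
\]
where $\Lambda$ bounds the multipliers and $C$ is the uniform constant above. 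This is at most $\epsilon_{stat}$ once the active widths fall below a fixed threshold.

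\textbf{Counting Case 1 iterations.} This is the step I expect to be hardest. Each Case 1 iteration splits every active interval into at least two parts, but the active set changes from one iteration to the next, so widths need not shrink uniformly. I would argue as in \cite{17,9}. There is a fixed width $\Delta^*>0$ such that whenever all active subintervals have width at most $\Delta^*$, the termination test passes. Every subinterval produced by the algorithm is the result of repeated subdivision of one of the finitely many initial intervals, and each subdivision at least halves the width. So only finitely many subintervals of width greater than $\Delta^*$ can ever be generated: roughly $n_H^0\,(\Delta(I)/\Delta^*)$ intervals per level, over finitely many levels. Each non-terminating Case 1 iteration must subdivide at least one of these intervals, because otherwise all active intervals would already have width at most $\Delta^*$ and the test would pass. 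Hence Case 1 can happen only finitely many times, and the algorithm terminates.
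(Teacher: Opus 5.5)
Your treatment of Case 2 is essentially the paper's. Both use the Slater control of Assumption 2 and impose the extra requirement $\delta<\gamma$; the paper adds this inside its proof even though the theorem statement omits it. Both then bisect repeatedly until $u_s$ becomes feasible for \eqref{eq29}.

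For Case 1 you take a genuinely different route. The paper argues by contradiction through compactness. It asserts that infinitely many Case 1 iterations drive the active widths to zero and extracts a convergent subsequence of $(u^{k_r},\lambda^{k_r},c(T^{k_r}))$. It then computes pointwise limits ($b_j\to h(u,c(T))$, $w_j\to 1/(r+1)$, $H_{TB}-h(u,c(T))\to\delta$) to conclude that the termination test eventually passes. You instead prove a uniform threshold lemma: there is some $\Delta^*>0$ such that small active widths imply \eqref{eq31}--\eqref{eq32}, using $M_{j,0}=1$, $M_{j,i}=O(\Delta^i)$, $\sum_j w_j=1$ and the multiplier bound. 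You then count the subintervals of width exceeding $\Delta^*$ in the subdivision forest. This gives more rigor exactly where the paper is loose. The paper's claim that active widths tend to zero ignores that the active set changes between iterations, and your counting argument is what actually justifies it. You also avoid the unnecessary limit of the weights and the interchange of limit and $\nabla_u$.

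One step, however, is wrong as written. On an active interval you claim $h(u^k,c(T_m^k))\geqslant -E(u^k,T_m^k)$. Since $H_{TB}(u^k,T_m^k)=0$, we have $-E(u^k,T_m^k)=\max_{t\in T_m^k}h(u^k,t)\geqslant h(u^k,c(T_m^k))$, so the inequality points the other way. For example, with $h(u,t)=t-s$ on an interval centred at $0$ it fails by $\Delta(T)/2$; the midpoint value can sit a first-order amount below the maximum.

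The repair uses your own observation $M_{j,0}=1$, which gives $b_j=h(u,c(T))+O(\Delta(T))$ uniformly on $U$. Combined with \eqref{eq18}, $0=H_{TB}\leqslant\max_j b_j+\delta+\overline{R}_q(T)$ then yields $h(u^k,c(T_m^k))\geqslant-\delta-\overline{R}_q(T_m^k)-C\,\Delta(T_m^k)$. Consequently \eqref{eq30} by itself cannot deliver \eqref{eq32} after one subdivision: it controls only $\overline{R}_q$, and the active set may change anyway. Drop that claim, and any tightening of the subdivision rule, which would alter the algorithm. The threshold $\Delta^*$ still exists because $\delta<\epsilon_{act}$ strictly and the extra terms vanish uniformly in $u$, so your counting argument goes through unchanged.
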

\begin{proof}
As shown in Algorithm 1, we only need to consider two cases: Case 1 and Case 2. To rule out the possibility of these two cases recurring indefinitely, we employ the proof approach from \cite{17}.
	
\textit{Excluding infinite occurrence of Case 1}: If Case 1 occurs infinitely, it implies that the active subinterval width $\Delta(T)\rightarrow0$. There exists a subsequence $\{k_r\}$ such that feasible solutions $u^{k_r}$ can be found for \eqref{eq29}, but none of these $u^{k_r}$ satisfy the KKT conditions of the original problem \eqref{eq1}. By Assumption 3, the multiplier sequence $\{\lambda^{k_r}\}$ is contained within the compact set $\varLambda$. Since $U\times\varLambda\times I$ is a compact set, the sequence $\{u^{k_r},\lambda^{k_r},c(T^{k_r})\}$ has a subsequence $\{\overline{k_r}\}$ that converges to the limit point $(u^*,\lambda^*,t^*)$. It remains to show that this limit point satisfies the termination conditions \eqref{eq31}--\eqref{eq33}.
	
First, any feasible solution to \eqref{eq29} is also a feasible solution to \eqref{eq1}, and condition \eqref{eq33} is always satisfied.
	
Second, to prove that the stability condition \eqref{eq31} holds, the key is to demonstrate that $\nabla_uH_\mathrm{TB}(u^{k_r},T_m^{k_r})$ converges to $\nabla_{u}h(u^{k_r},c(T_m^{k_r}))$. Based on the previously obtained gradient \eqref{eq28}, we consider the limits for $\nabla_ub_j(u^{k_r},T_m^{k_r})$ and $w_j(u)$ separately. As the interval width $\Delta(T)\rightarrow0$, the interval $T_m^{k_r}$ contracts to its midpoint $c(T_m^{k_r})$. According to \eqref{eq11}, when $t\rightarrow c(T_m^{k_r})$, all terms $(t-c(T_m^{k_r}))^i$ for $i\geqslant1$ vanish, so we have 
	\begin{equation}\label{eq34}
		\lim_{\Delta(T)\to0}p_{q-1}(u^{k_r},t)=a_0(u^{k_r},c(T_m^{k_r})=h(u^{k_r},c(T_m^{k_r})).
	\end{equation}
	Since $p_{q-1}^*(\tau)$ is an affine transformation of $p_{q-1}$, and by leveraging the range enclosure property of the Bernstein form, we have $\lim_{\Delta(T)\to0}b_j(u^{k_r},T_m^{k_r})=h(u^{k_r},c(T_m^{k_r}))$. As $h(u,t)$ is sufficiently smooth, we obtain
	\begin{equation}\label{eq35}
		\begin{aligned}
		\lim_{\Delta(T)\to0}\nabla_ub_j(u^{k_r},T_m^{k_r})=\nabla_u(\lim_{\Delta(T)\to0}b_j(u^{k_r},T_m^{k_r}))=\nabla_uh(u^{k_r},c(T_m^{k_r})).
		\end{aligned}
	\end{equation}
	At the same time, for the limit of $w_j(u)$, we have
	\begin{equation}\label{eq36}
		\begin{aligned}
		\lim_{\Delta(T)\to0}w_j(u)=\frac{\exp(\rho\cdot h(u^{k_r},c(T_m^{k_r}))}{(r+1)\cdot\exp(\rho\cdot h(u^{k_r},c(T_m^{k_r})))}=\frac{1}{r+1}.
	    \end{aligned}
	\end{equation}
	Combining \eqref{eq35} and \eqref{eq36}, we obtain the limit of the total gradient as follows
	\begin{equation}\label{eq37}
		\begin{aligned}
		\lim_{\Delta(T)\to0}\nabla_uH_{TB}(u^{k_r},T_m^{k_r})&=\sum_{j=0}^r(\lim_{\Delta(T)\to0}w_j)\cdot(\lim_{\Delta(T)\to0}\nabla_ub_j(u^{k_r},T_m^{k_r}))\\&=\nabla_uh(u^{k_r},c(T_m^{k_r})).
		\end{aligned}
	\end{equation}
	
	Therefore, when $k_r\rightarrow\infty$, the stability condition in \eqref{eq29} converges to \eqref{eq31}. There exists a $k_r^{d_1}$ such that when $\forall k_r>k_r^{d_1}$, condition \eqref{eq31} is satisfied.
	
	Finally, we need to prove that condition \eqref{eq32} holds. For inactive constraints, condition \eqref{eq32} is clearly satisfied. For any active constraint $T_m^{k_r}\in\Pi_{act}^{k_r}$, we need to prove that $h(u^{k_r},c(T_m^{k_r}))$ converges to 0. As $\Delta(T)\rightarrow0$, by Theorem 1 we have
	\begin{equation}\label{eq38}
		\lim_{\Delta(T)\to0}\left|H_{TB}(u^{k_r},T_m^{k_r})-h(u^{k_r},c(T_m^{k_r}))\right|=\delta.
	\end{equation}
	
   Due to the strict enforcement of the active subinterval path constraint, we have $H_{TB}(u^{k_r},T_m^{k_r})=0$. Consequently, $h(u^{k_r},c(T_m^{k_r}))=-\delta$. To satisfy the complementarity tolerance condition \eqref{eq32}, which requires $h \in [-\epsilon_{act}, 0]$, the smoothing parameter $\rho$ must be selected sufficiently large such that the bias satisfies $\delta < \epsilon_{act}$. Under this condition, the limit value $-\delta$ falls strictly within the tolerance range. Hence, we can conclude that there exists an index $k_r^{d_2}$ such that for all $k_r > k_r^{d_2}$, condition \eqref{eq32} is satisfied.
	
	In summary, if Case 1 occurs infinitely, then for all $k_r>\max(k_r^{d1},k_r^{d2})$, termination conditions \eqref{eq31}--\eqref{eq33} will be simultaneously satisfied. This implies that the algorithm terminates at iteration $k_r$, which contradicts the assumption. Therefore, Case 1 cannot recur indefinitely.
	
	\textit{Excluding infinite occurrence of Case 2}: Assume that Case 2 recurs indefinitely, implying that there exists an infinite subsequence $\{k_l\}$ of iteration indices such that the feasible region of the approximate problem \eqref{eq29} is empty. According to the update rule of Algorithm 1, every occurrence of Case 2 results in a bisection of all subintervals. Consequently, at the $k_l$-th iteration, the maximum width of the subintervals satisfies
	\begin{equation}\label{eq39}
		\max_{T\in\Pi^{k_l}}\Delta(T)\leqslant\frac{\Delta(I)}{n_H^0\cdot2^l}.
	\end{equation}
	
	According to Assumption 2, there exists a Slater point $u_s$ and a constant $\gamma > 0$ such that the original path constraint is strictly satisfied
	\begin{equation}\label{eq40}
		\max_{t\in I}h(u_s,t)=-\gamma<0.
	\end{equation}
	
	From Theorem 1, the upper bound function $H_{TB}(u,T)$ accounts for the  Bernstein approximation error, a Taylor remainder, and a constant smoothing bias $\delta$. Therefore, there exists a constant $M > 0$ such that the overestimation error is bounded by
	\begin{equation}\label{eq41}
		H_{TB}(u_s,T)-\max_{t\in T}h(u_s,t)\leqslant \delta+ M\cdot(\Delta(T))^q.
	\end{equation}
	
	Combining these relations, we examine the feasibility of the Slater point $u_s$ for the approximate problem \eqref{eq29}. For any subinterval $T_i^{k_l} \in \Pi^{k_l}$, we have
	\begin{equation}\label{eq42}
		\begin{aligned}
			H_{TB}(u_s,T_i^{k_l})&\leqslant\max_{t\in T_{i}^{k_{l}}}h(u_{s},t)+ \delta+ M\cdot(\Delta(T_{i}^{k_{l}}))^{q}\\
			&\leqslant-\gamma+\delta+M\cdot\left(\frac{\Delta(I)}{n_H^0\cdot2^l}\right)^q.
		\end{aligned}
	\end{equation}
	
	We assume the smoothing parameter $\rho$ is chosen sufficiently large such that the bias is strictly smaller than the Slater margin, i.e., $\delta < \gamma$. Consequently, the constant term $-\gamma + \delta$ is strictly negative. Since the interval-dependent term converges to 0 as $l \to \infty$, there exists an integer $L$ such that for all $l > L$
    \begin{equation}\label{eq43}
	M\cdot\left(\frac{\Delta(I)}{n_H^0\cdot2^l}\right)^q<\gamma-\delta.
	\end{equation}
	
	This implies that for all $l > L$ and for all subintervals $T_i^{k_l} \in \Pi^{k_l}$, the condition $H_{TB}(u_s, T_i^{k_l}) < 0$ holds. Therefore, $u_s$ becomes a feasible solution for \eqref{eq29} when $l$ is sufficiently large, contradicting the assumption that the feasible region of \eqref{eq29} is empty. Thus, Case 2 cannot recur indefinitely.
	
    Since the infinite recurrence of both Case 1 and Case 2 has been ruled out, the algorithm is guaranteed to terminate in a finite number of iterations.
\end{proof}

\section{Simulation Examples}
In this section, the numerical efficacy of the proposed Taylor-Bernstein form inner approximation algorithm is evaluated against the Taylor model method \cite{17} using three benchmark problems adopted from \cite{26,27,35}. Specifically, Example 1 involves the path constraint $h(t) = -x_1(t) - 0.4 \leqslant 0$ with a remainder upper bound constant $B_U = 260$; Example 2 entails $h(t) = x_2(t)+ 0.5-8(t - 0.5)^2 \leqslant 0$ with $B_U = 33$; and Example 3 is subject to two constraints: $h_1(t) = -x_3(t) \leqslant 0$ ($B_U = 750$) and $h_2(t) = -x_2(t) - 0.8 \leqslant 0$ ($B_U = 20$). Detailed mathematical formulations of these examples are provided in \ref{app1}. To ensure a fair comparison, all numerical simulations were performed on an Intel Core i5-10300H CPU (2.50 GHz) using MATLAB R2024b, utilizing the $ode45$ solver and the $sqp$ algorithm within $fmincon$. Uniform parameter settings were applied across all examples: Taylor expansion order $q=3$, Bernstein polynomial order $r=2$, smoothing parameter $\rho=1500$, and tolerances for the KKT conditions fixed at $10^{-3}$. Additionally, the number of CVP segments was set to 30 for Examples 1 and 3, and 20 for Example 2.

Figs. \ref{fig3}--\ref{fig5} illustrate the comprehensive simulation results for the three benchmark problems. As summarized in Table \ref{tab1}, the proposed Taylor-Bernstein algorithm significantly reduces the scale of discretized constraints without compromising solution optimality, resulting in a reduction in total CPU time of approximately 30\% to 40\%. This computational efficiency is explicitly corroborated by the KKT stationarity convergence profiles in Figs. \ref{fig3c}--\ref{fig5c}, which consistently demonstrate the improved convergence rate of the proposed method compared to the Taylor Model approach. Furthermore, regarding constraint satisfaction, the path constraint trajectories in Figs. \ref{fig3b}--\ref{fig5b} confirm that the proposed method ensures strict adherence to path constraints throughout the entire time horizon. 

\begin{table*}[htbp]
	\caption{Comparison of computational performance between Taylor model and Taylor-Bernstein methods across benchmark examples. (Examples 1 and 2 consist of a single path constraint, whereas Example 3 involves two distinct path constraints.)}
	\centering
	
	\renewcommand{\arraystretch}{1.5}
	
	\begin{tabular}{c|ccccc}
		\hline
		
		Case Study & Algorithm & Iterations & Number of Constraints &  CPU time(s) & Optimal Cost Function \\
		\hline

		\multirow{2}{*}{Example 1} 
		& Taylor model & 7 & 293 & 5.30 & 2.96  \\
		& Taylor-Bernstein & 3 & 87 & 3.21 & 2.96  \\
		\hline

		\multirow{2}{*}{Example 2} 
		& Taylor model & 5 & 97 & 5.93 & 0.17  \\
		& Taylor-Bernstein & 3 & 44 & 3.84 & 0.17  \\
		\hline

		\multirow{2}{*}{Example 3} 
		& Taylor model & 7 & 41+669 & 9.34 & 0.033  \\
		& Taylor-Bernstein & 4 & 40+104 & 6.19 & 0.033  \\
		\hline
	\end{tabular}
	\label{tab1}
\end{table*}

\begin{figure*}[htpb] 
	\centering
	\begin{subfigure}[b]{0.328\linewidth}
		\includegraphics[width=\linewidth]{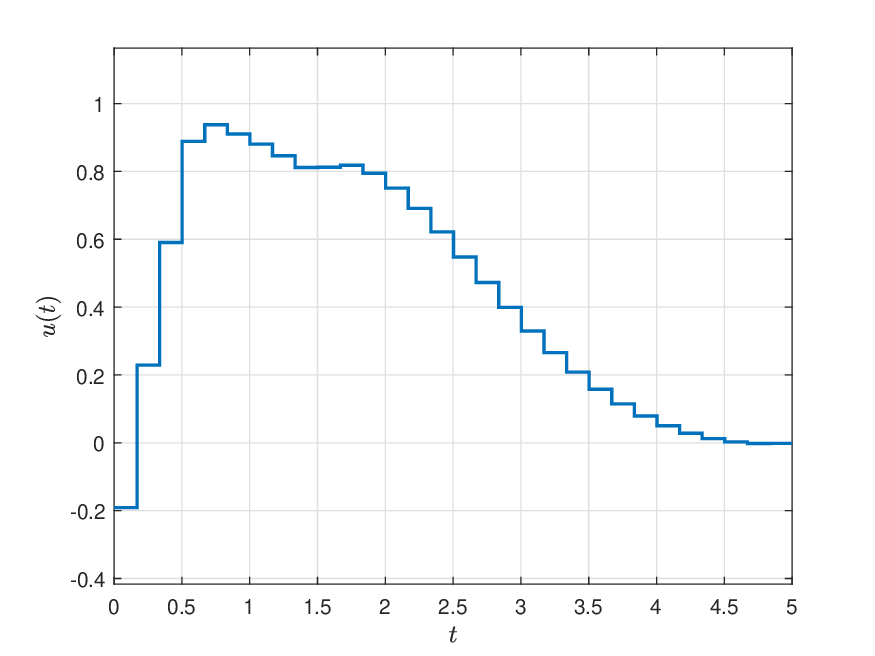}
		\caption{Control input}
		\label{fig3a}
	\end{subfigure}
	\hfill
	\begin{subfigure}[b]{0.328\linewidth}
		\includegraphics[width=\linewidth]{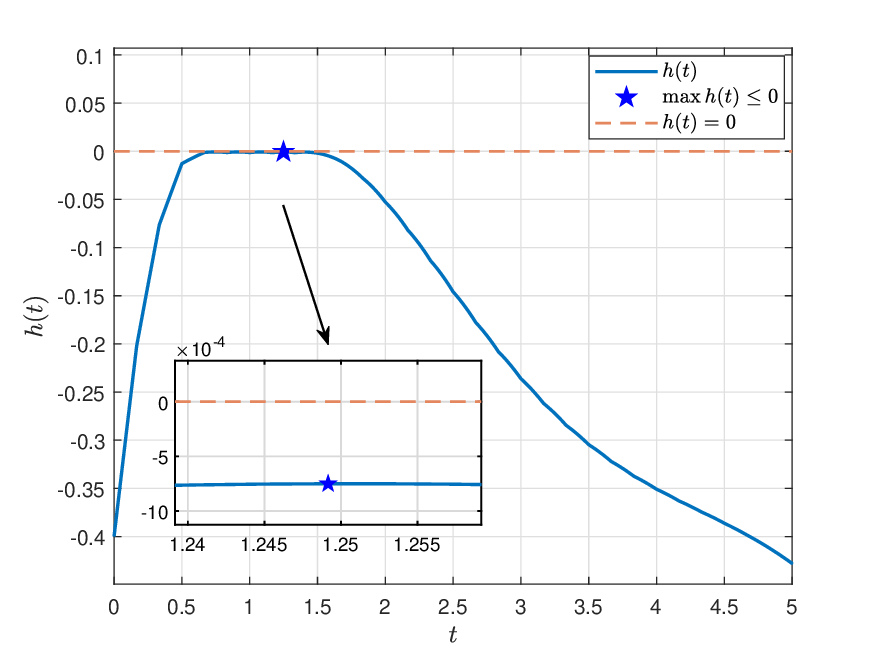}
		\caption{Path constraint }
		\label{fig3b}
	\end{subfigure}
	\hfill
	\begin{subfigure}[b]{0.328\linewidth}
		\includegraphics[width=\linewidth]{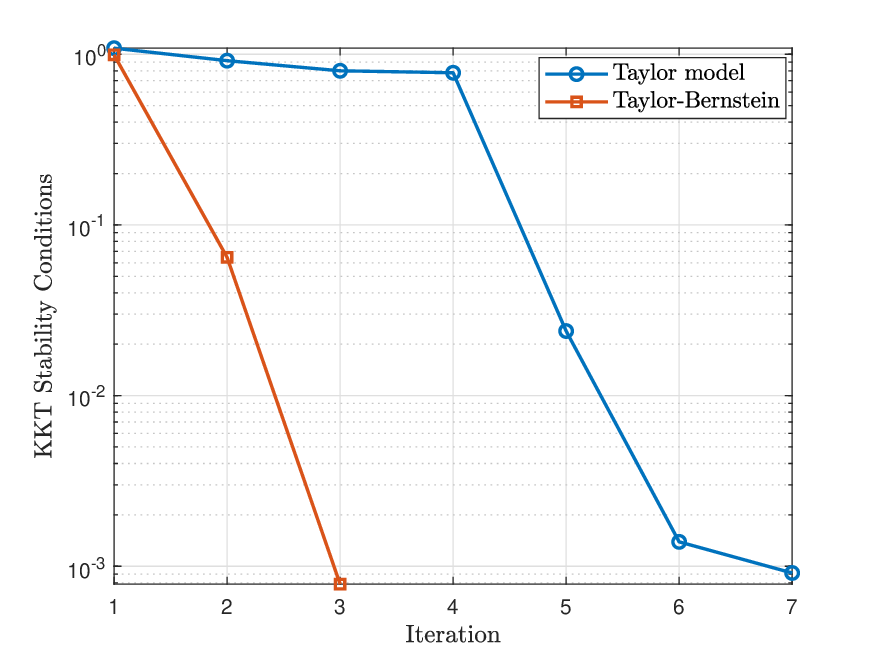}
		\caption{KKT stationarity error}
		\label{fig3c}
	\end{subfigure}
	\caption{Simulation results of Example 1.}
	\label{fig3}
\end{figure*}

\begin{figure*}[htpb] 
	\centering
	\begin{subfigure}[b]{0.328\linewidth} 
		\includegraphics[width=\linewidth]{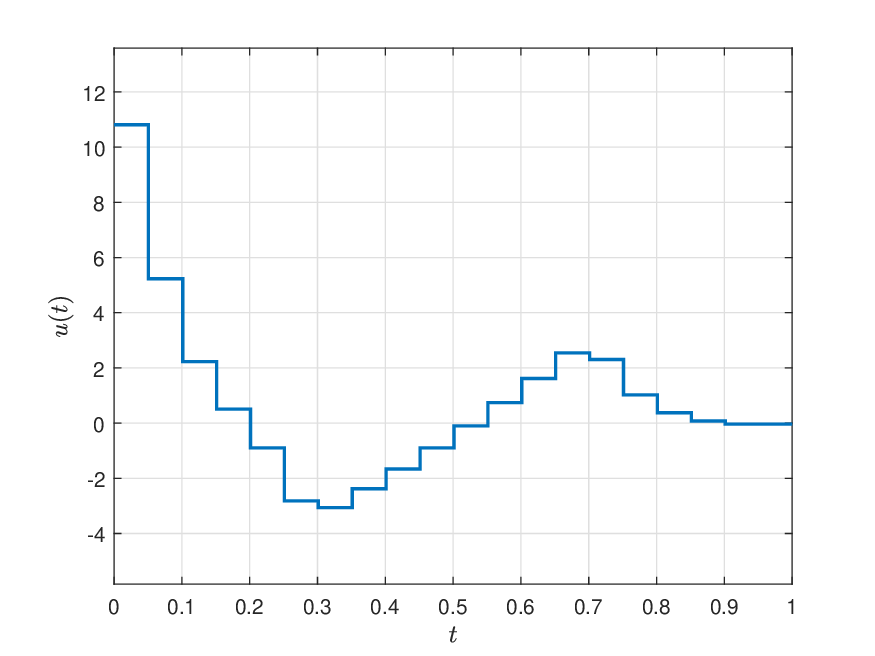}
		\caption{Control input}
		\label{fig4a}
	\end{subfigure}
	\hfill
	\begin{subfigure}[b]{0.328\linewidth}
		\includegraphics[width=\linewidth]{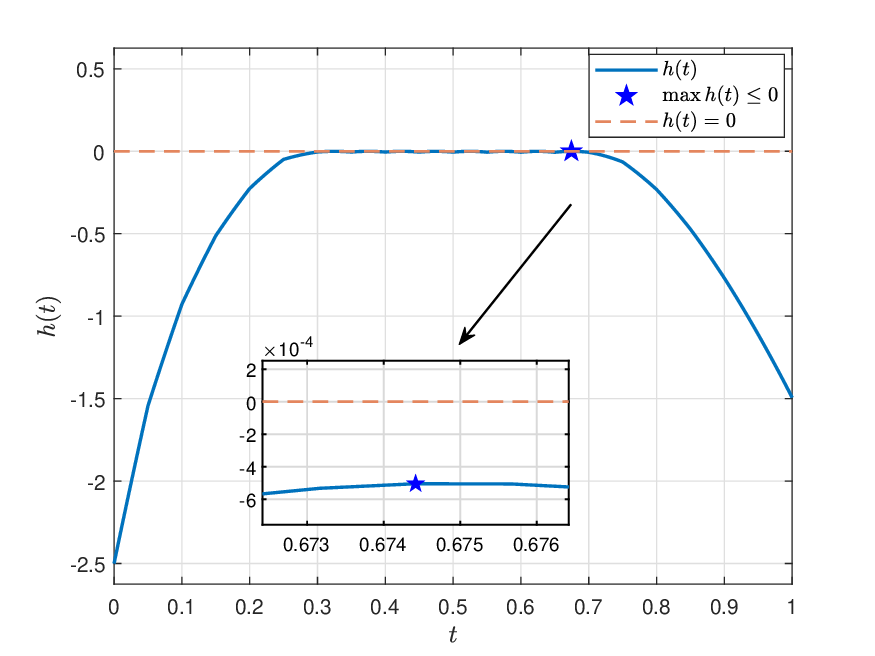}
		\caption{Path constraint }
		\label{fig4b}
	\end{subfigure}
	\hfill
	\begin{subfigure}[b]{0.328\linewidth}
		\includegraphics[width=\linewidth]{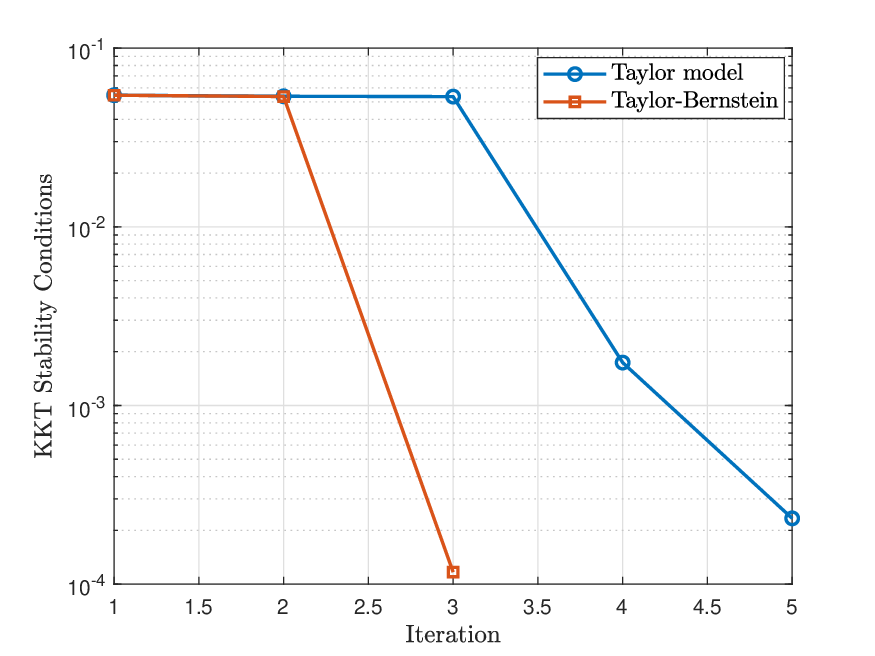}
		\caption{KKT stationarity error}
		\label{fig4c}
	\end{subfigure}
	\caption{Simulation results of Example 2.}
	\label{fig4}
\end{figure*}

\begin{figure*}[htp] 
	\centering
	\begin{subfigure}[b]{0.328\linewidth} 
		\includegraphics[width=\linewidth]{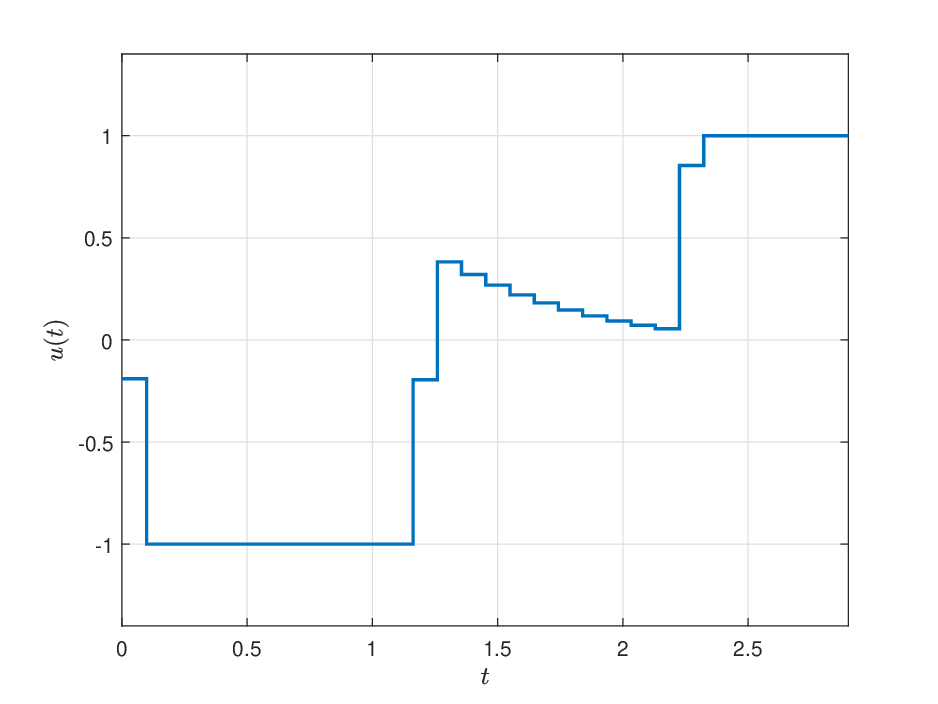}
		\caption{Control input}
		\label{fig5a}
	\end{subfigure}
	\hfill
	\begin{subfigure}[b]{0.328\linewidth}
		\includegraphics[width=\linewidth]{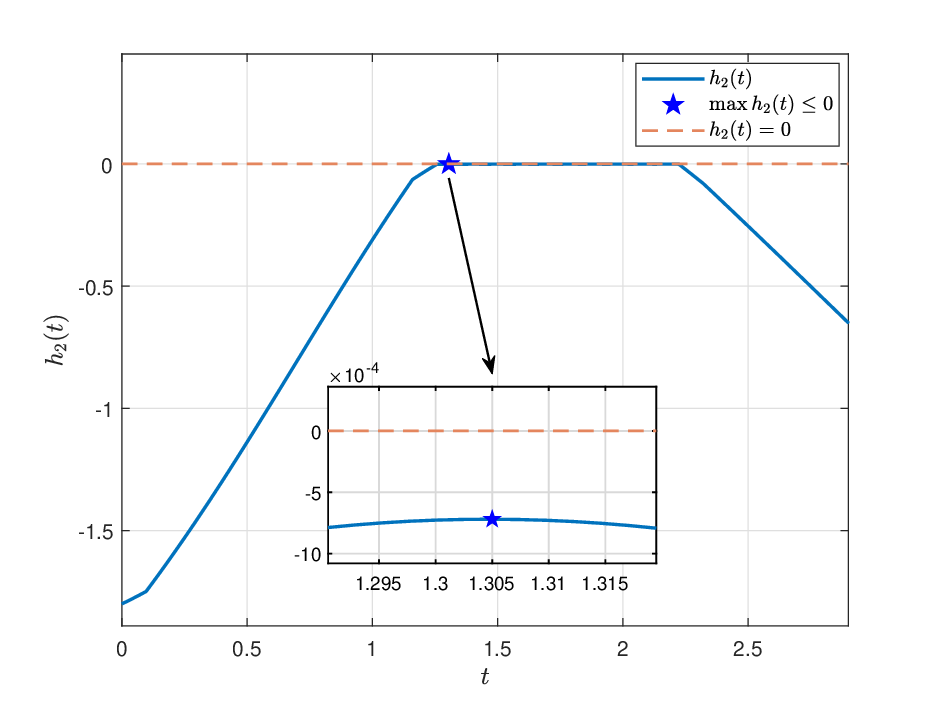}
		\caption{Path constraint }
		\label{fig5b}
	\end{subfigure}
	\hfill
	\begin{subfigure}[b]{0.328\linewidth}
		\includegraphics[width=\linewidth]{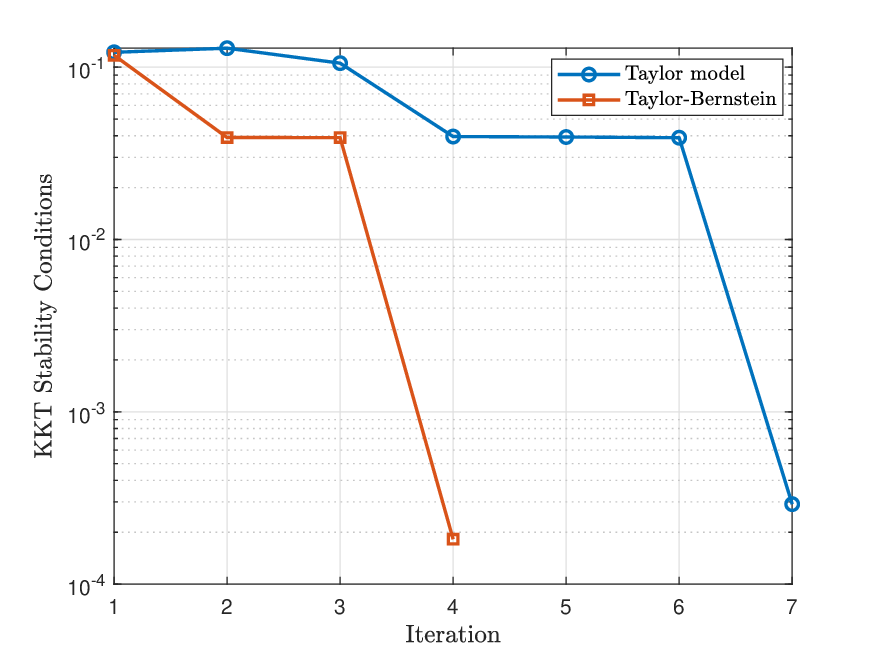}
		\caption{KKT stationarity error}
		\label{fig5c}
	\end{subfigure}
	\caption{Simulation results of Example 3.}
	\label{fig5}
\end{figure*}

To further elucidate the underlying reasons for the superior computational efficiency of the Taylor-Bernstein form inner approximation algorithm, we conducted a statistical evaluation using random sampling within the decision variable space. Fig. \ref{fig6} presents the distribution of overestimation errors for the three benchmark examples under the initial interval configuration. The results demonstrate that the proposed method exhibits consistent superiority across all examples, characterized by a substantially reduced median error and markedly smaller dispersion, in contrast to the Taylor model method, which suffers from large error fluctuations due to the dependency effect. These statistical observations verify that, by effectively suppressing the dependency effect, the proposed algorithm constructs considerably tighter path constraint upper bounds. Consequently, it yields less conservative subproblems, thereby avoiding unnecessary interval refinements triggered by loose bounds. This mechanism substantially reduces the scale of constraints in subsequent iterations, directly explaining the significant improvement in computational efficiency reported in Table \ref{tab1}.

\begin{figure*}[htp] 
	\centering
	\begin{subfigure}[b]{0.328\linewidth} 
		\includegraphics[width=\linewidth]{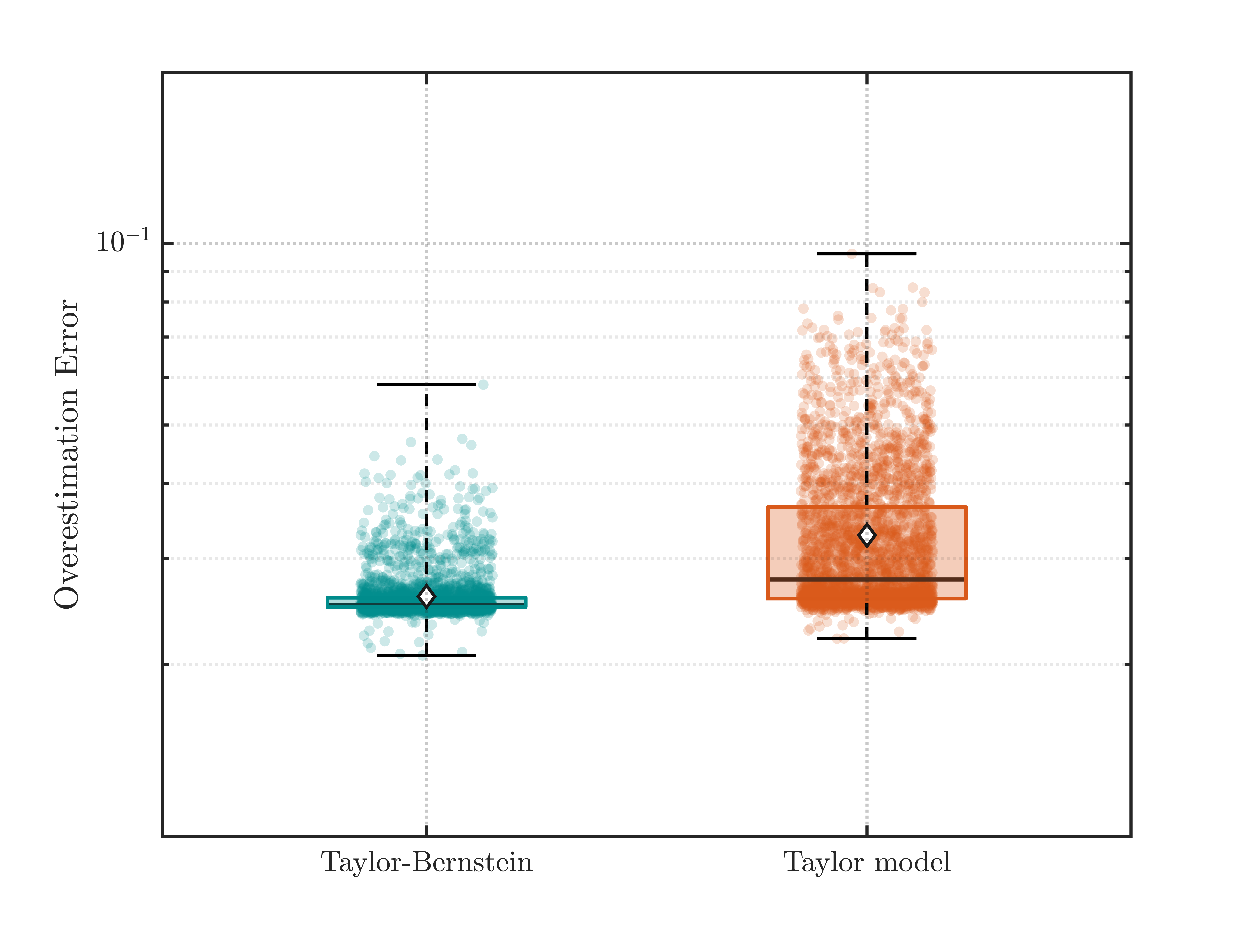}
		\caption{Example 1 }
	\end{subfigure}
	\hfill
	\begin{subfigure}[b]{0.328\linewidth}
		\includegraphics[width=\linewidth]{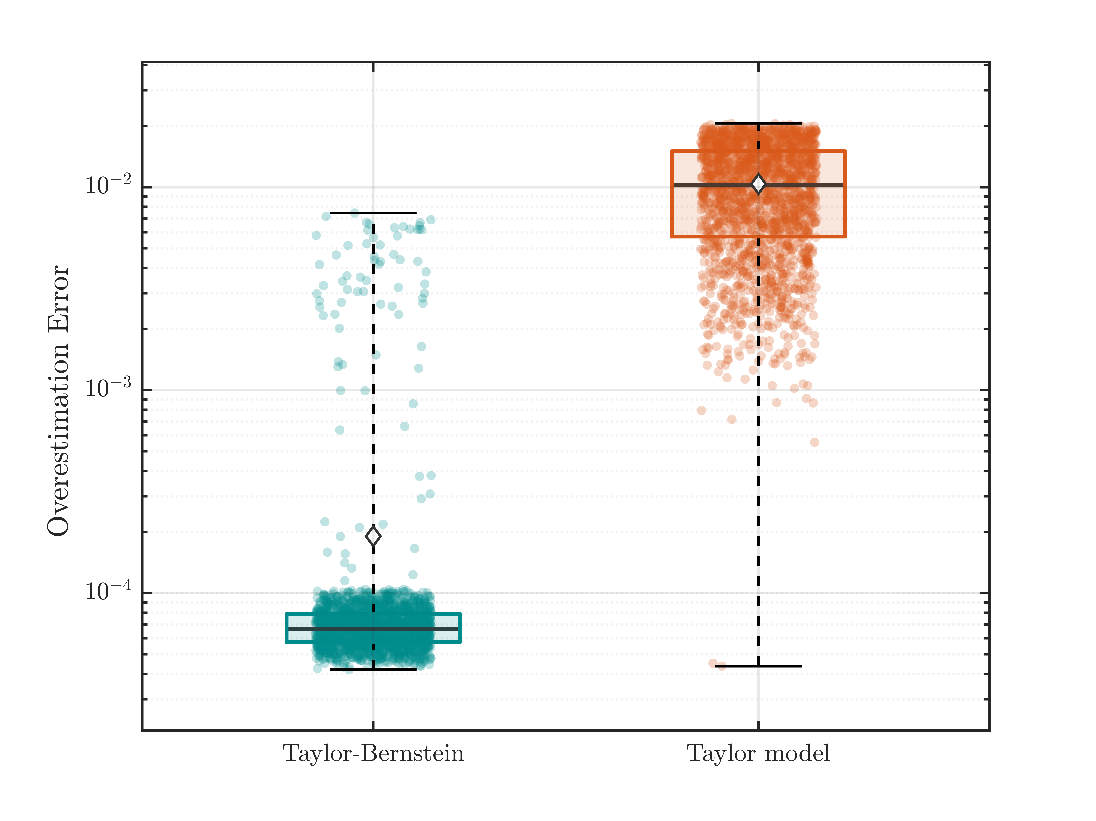}
		\caption{Example 2  }
	\end{subfigure}
	\hfill
	\begin{subfigure}[b]{0.328\linewidth}
		\includegraphics[width=\linewidth]{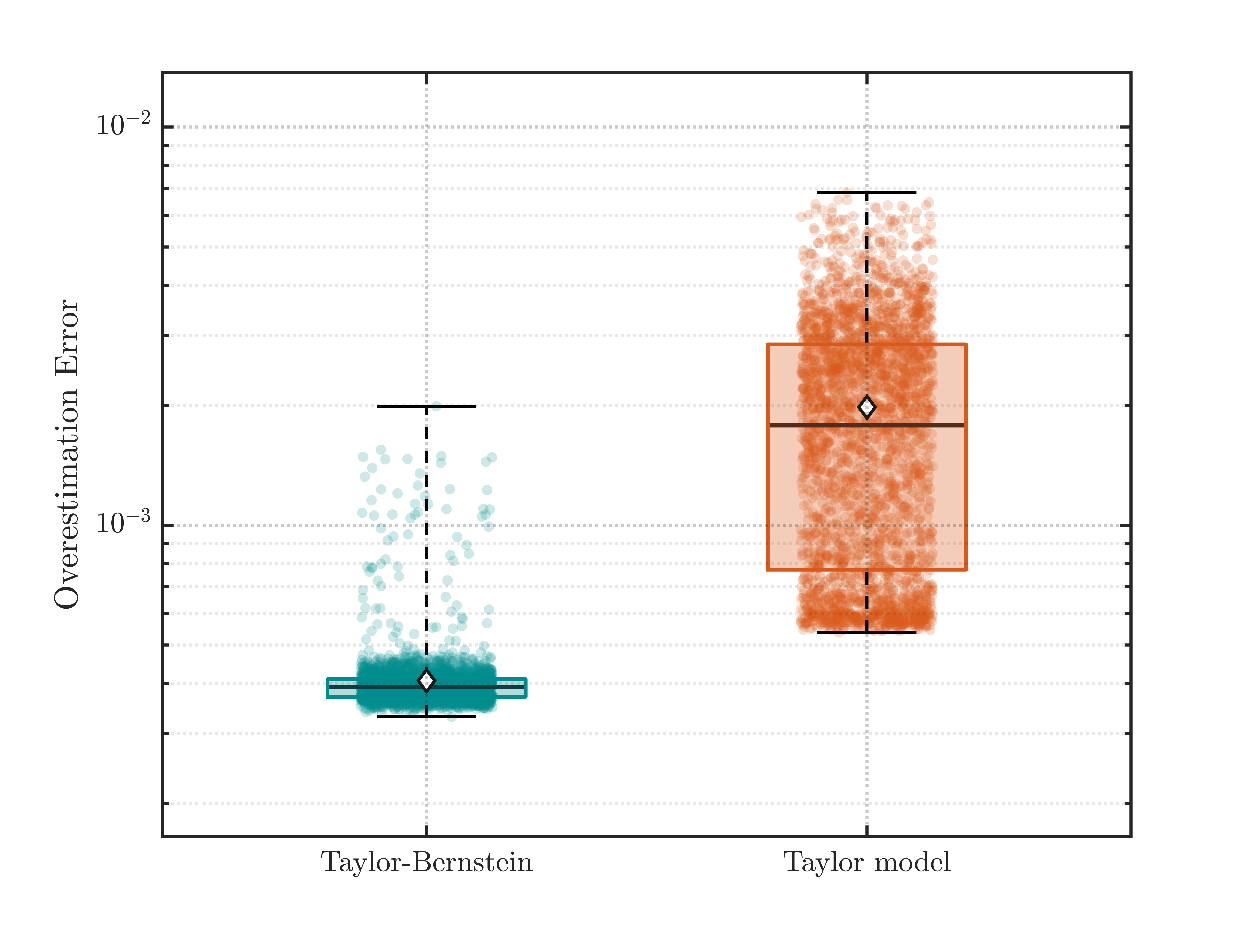}
		\caption{Example 3 }
	\end{subfigure}
	\caption{Statistical comparison of overestimation errors on the initial interval configuration. The central mark indicates the median, the diamond marker represents the mean, and the bottom and top edges of the box represent the 25th and 75th percentiles, respectively.}
	\label{fig6}
\end{figure*}

\section{Conclusion}
This paper presents an inner approximation algorithm for dynamic optimization based on the Taylor-Bernstein form. The core advantage of this method lies in leveraging the convex hull property of Bernstein polynomials to construct strict upper bounds for path constraints that are significantly tighter than those of traditional Taylor models, thereby effectively suppressing the dependency effect in interval arithmetic. Theoretically, the finite convergence of the algorithm is rigorously established by ruling out the infinite recurrence of the algorithmic cases. Numerical simulations on three classic benchmark problems demonstrate that the proposed algorithm significantly reduces the scale of the discretization mesh while guaranteeing strict feasibility. Consequently, it achieves a 30\% to 40\% improvement in computational efficiency compared to existing methods. Future work will explore the application of this framework to non-smooth dynamic optimization and closed-loop dynamic optimization.

\appendix
\section{Numerical Examples}
\label{app1}
\textit{Example 1.} Consider the optimal control problem for the state-constrained Van der Pol oscillator in \cite{26}:
\begin{equation}
	\begin{aligned}
		\operatorname*{min}_{u(t)} \quad & x_3(5)\\
		\mathrm{s.t.} \quad & \dot{x}_1(t)=(1-x^2_2(t))x_1(t)-x_2(t)+u(t),\\
		&\dot{x}_{2}(t)=x_{1}(t),\\
		&\dot{x}_3(t)=x^2_1(t)+x^2_2(t)+u^2(t),\\
		&x(0)=[0,1,0],\\
		&-x_{1}(t)\leqslant0.4, \\
		&-0.3 \leqslant u(t) \leqslant 1,\\
		&0\leqslant t \leqslant 5.
	\end{aligned}
\end{equation}

\textit{Example 2.} The following optimal control problem with time-varying constraints is derived from \cite{27}:
\begin{equation}
	\begin{aligned}
		\operatorname*{min}_{u(t)}\quad&\int_0^1\left\{x^2_1(t)+x^2_2(t)+0.005u^2(t)\right\}dt\\
		\mathrm{s.t.}\quad&\dot{x}_1(t)=x_2(t),\\
		&\dot{x}_2(t)=-x_2(t)+u(t),\\
		&x(0)=[0,-1]^\top,\\
		&x_2(t)+0.5-8(t-0.5)^2\leqslant0,\\
		&-20\leqslant u(t)\leqslant20,\\
		&0\leqslant t\leqslant 1.
	\end{aligned}
\end{equation}

\textit{Example 3.} Consider an obstacle problem consisting of two path constraints \cite{35}:

\begin{equation}
	\begin{aligned}
		\operatorname*{min}_{u(t)}\quad&5x_{1}^{2}(t_{\mathrm{f}})+x_{2}^{2}(t_{\mathrm{f}})\\
		\mathrm{s.t.}\quad&\dot{x}_1=x_2,\\
		&\dot{x}_2=u-0.1(1+2x_1^2)x_1,\\
		&x_{3}=9(x_1-1)^2+\left(\frac{x_2-0.4}{0.3}\right)^2-1,\\
		&\boldsymbol{x}(0)=[1,1,3]^{\mathrm{T}},\\
		&-1\leqslant u\leqslant1,\\
		&-x_2(t)\leqslant0.8,\\
		&-x_{3}(t)\leqslant0,\\
		&0\leqslant t\leqslant2.9.
	\end{aligned}
\end{equation}

\bibliographystyle{elsarticle-num} 
\balance
\bibliography{reference.bib}

\end{document}